\newcounter{cequation}
\newtheorem{theorem}
{Theorem}
\newtheorem*{theorem*}{Theorem}
\newtheorem{lemma}
{Lemma}
\newtheorem{corollary}
{Corollary}
{Conjecture}
\newtheorem{proposition}
{Proposition}
\theoremstyle{definition}
\newtheorem{example}
{Example}
\newtheorem{definition}
{Definition}
\newtheorem*{definition*}{Definition}
\newtheorem*{notation*}{Notation}
\theoremstyle{remark}
\newtheorem{remark}[cequation]{Remark}
\makeatletter\@addtoreset{equation}{section}
\makeatletter\@addtoreset{section}{part}
\def \CC {\mathbb{C}}
\def \PP {\mathbb{P}}
\def \AA {\mathbb{A}}
\def \QQ {\mathbb{Q}}
\def \ZZ {\mathbb{Z}}
\def \Cl {\mathrm{Cl}}
\def \Pic {\mathrm{Pic}}
\def \ge {\geqslant}
\title{Smooth prime Fano complete intersections in toric varieties}
\author{Victor Przyjalkowski and Constantin Shramov}
\address{\emph{Victor Przyjalkowski}
\newline
\textnormal{Steklov Mathematical Institute of RAS, 8 Gubkina street, Moscow 119991, Russia.
}
\newline
\textnormal{\texttt{victorprz@mi.ras.ru, victorprz@gmail.com}}}
\address{\emph{Constantin Shramov}
\newline
\textnormal{Steklov Mathematical Institute of RAS,
8 Gubkina street, Moscow 119991, Russia.
}
\newline
\textnormal{National Research University Higher School of Economics, Laboratory of Algebraic Geometry, NRU HSE, 6 Usacheva str., Moscow, 117312, Russia.
}
\newline
\textnormal{\texttt{costya.shramov@gmail.com}}}
\thanks{Victor Przyjalkowski was supported by
President Grant MD-30.2020.1. Constantin Shramov was supported by the
HSE University Basic Research Program,
Russian Academic Excellence Project~\mbox{``5-100''}.
Both authors are supported by the Foundation for the
Advancement of Theoretical Physics and Mathematics ``BASIS''}
\begin{document}

\begin{abstract}
We prove that a smooth well formed Picard rank one Fano complete intersection of dimension at least $2$ in a toric variety is a weighted complete intersection.
\end{abstract}

\maketitle

An obvious way to construct new Fano varieties
is to describe them as divisors or complete intersections of divisors
in those varieties that are already
known and well understood:
for instance, in toric varieties (in particular, in weighted projective spaces), or in Grassmannians.
Fano varieties that are complete intersections in Grassmannians
were studied by several authors, see, for instance,~\cite{Kuchle-Grass},~\cite{Kuchle-Geography}, or~\cite{FM}.
As for Fano weighted complete intersections, they also attracted some attention, see e.g.~\cite{ChenChenChen},~\cite{PST},~\cite{PrzyalkowskiShramov-AutWCI},~\cite{PrzyalkowskiShramov-Weighted},~\cite{PrzyalkowskiShramov-Siberia},~\cite{PrzyalkowskiShramov-Albania}.
The most interesting case from the point of view of classification
is the case of prime Fano varieties, that is ones of Picard rank one. Note that this property automatically
holds for smooth complete intersections (of dimension at least~$3$)
in Grassmannians or weighted projective spaces by the Lefschetz theorem.
On the other hand, for complete intersections in other toric
varieties it does not hold often.
In this note we prove that the only prime Fano complete intersections of dimension at least $2$ in toric varieties
are weighted complete intersections.

First we remind the classical construction of a toric variety as a quotient (see~\cite{Co95}).
Let $Y$ be a $\QQ$-factorial projective toric variety; here and below all varieties are assumed to be defined
over the field $\mathbb{C}$ of complex numbers. Let~\mbox{$D_1,\ldots, D_b$} be its prime boundary divisors.
They generate the divisor class group~\mbox{$\Cl(Y)$}.
Consider the $\Cl(Y)$-graded algebra~\mbox{$S=\CC[x_1,\ldots, x_b]$} with grading defined by
$$
\deg\left(\prod_{i=1}^b x_i^{r_i}\right)=\sum_{i=1}^b r_i D_i.
$$
One has $\mathrm{Spec}\,(S)\cong\AA^b$, and there is a natural correspondence between rays $e_i$ of a fan of $Y$ and variables $x_i$.
Define the subvariety $Z$ in $\mathrm{Spec}\,(S)$ as the intersection of hypersurfaces~\mbox{$\{\prod x_i=0\mid \ e_i\notin \sigma\}$} over all cones~$\sigma$ of a fan of~$Y$. Then $Y$ is a geometric quotient
of 
$$
U=\mathrm{Spec}\,(S)\setminus Z\subset\AA^b
$$
by the abelian linear algebraic group
$$
\mathbf{D}=\mathrm{Hom}_\ZZ(\Cl(Y),\CC^*).
$$

\begin{example}
\label{example:wps grading}
Let $Y=\PP(a_0,\ldots,a_N)$ be a weighted projective space of dimension $N$.
Suppose that the greatest common divisor of the numbers
$a_0,\ldots,a_N$ equals~$1$ (which can be always achieved by cancelling
the greatest common divisor of~\mbox{$a_0,\ldots,a_N$}).
Recall that nevertheless the collection $a_0,\ldots,a_N$
is \emph{not} uniquely defined by $Y$. However for a suitable choice of the weights
$a_0,\ldots,a_N$ the fan of the toric variety $Y$ in $\ZZ^N$
has $N+1$ rays with primitive vectors $v_0,\ldots,v_N$ subject to
the relation
\begin{equation}\label{eq:relation}
a_0v_0+\ldots+a_Nv_N=0,
\end{equation}
see~\cite[1.2.5]{Do82}.
Using the description of $\Cl(Y)$ via the rays of the fan, we see that~\mbox{$\Cl(Y)\cong \ZZ$}.
This means that $\mathbf{D}\cong\CC^*$.
Furthermore, one has~\mbox{$Z=\{0\}\subset \AA^{N+1}$},
so that
$$
Y\cong\left(\AA^{N+1}\setminus \{0\}\right)/\CC^*.
$$
\end{example}

\begin{example}[{\cite[Example 1.3]{Kasprzyk}}]
Let $Y$ be a three-dimensional toric variety
whose fan has rays generated by the vectors
$$(1,0,0),\ \ (0,1,0),\ \
(1,-3,5),\ \ (-2,2,-5)
$$
in $\ZZ^3$.
One can see that
$$
\Cl(Y)\cong \ZZ\oplus \ZZ/5\ZZ.
$$
Therefore, we have
$$
\mathbf{D}=\mathrm{Hom}_\ZZ(\ZZ\oplus \ZZ/5\ZZ,\CC^*)\cong\CC^*\times \ZZ/5\ZZ.
$$
In particular, $\mathbf{D}$ is not connected in this case.
One has
$$
Y\cong \left(\AA^4\setminus\{0\}\right)/\mathbf{D}
\cong\PP^3/\left(\ZZ/5\ZZ\right).
$$
\end{example}

\begin{remark}
According to \cite{BorisovBorisov}, the threefold $Y$ is the unique
three-dimensional toric variety with terminal singularities that
has Picard rank $1$ and is not a weighted projective space.
\end{remark}

\begin{example}
\label{example:general type 3fold}
Let $Y$ be a four-dimensional toric variety
whose fan has rays generated by the vectors
$$
(1,0,0,0),\ \ (0,1,0,-1),\ \ (0,0,-1,0),\ \ (0,0,2,-1),\ \ (-1,-1,-1,2)
$$
in $\mathcal N\cong \ZZ^4$.
The sum of these vectors is equal to $0$, so $Y$ is a
quotient of $\PP^4$. In fact, one has
$$
Y\cong \left(\AA^4\setminus\{0\}\right)/\left(\CC^*\times \ZZ/3\ZZ\right)
\cong\PP^4/\left(\ZZ/3\ZZ\right).
$$
Since any three of the above five vectors form
a part of a basis of $\mathcal N$,
we see that the singularities of $Y$ are isolated.
\end{example}

\begin{example}[{cf.~\cite[Example 1.2]{Kasprzyk}}]
\label{example:general general type}
Let $p$ be a positive integer, and let~\mbox{$G=\ZZ/p\ZZ$}.
Consider the action of $G$ on the projective space $\PP^{p-1}$
with homogeneous coordinates~\mbox{$x_0,\ldots,x_{p-1}$}
given by the formula $x_i\mapsto \varepsilon^i x_i$,
where $\varepsilon$ is a primitive $p$-th root of unity.
Set~\mbox{$Y=\PP^{p-1}/G$}.
Then $Y$ is a $\QQ$-factorial toric variety,
and
$$
Y\cong \left(\AA^{p}\setminus\{0\}\right)/\left(\CC^*\times \ZZ/p\ZZ\right).
$$
Note that if $p$ is a prime number, then the singularities of $Y$
are isolated.
\end{example}

The above examples motivate the following definition.

\begin{definition}[{cf.~\cite[Definition 1.1]{Kasprzyk}}]
\label{definition:FPS}
An $N$-dimensional toric variety whose fan has~\mbox{$N+1$} rays is called a \emph{generalized weighted projective space}.
\end{definition}

\begin{lemma}[{\cite[Corollary 2.3]{Kasprzyk}}]
\label{lemma:quotient Pic 1}
Every generalized weighted projective space is either a weighted projective space,
or a quotient
of a weighted projective space by a non-trivial finite group  acting freely in codimension~$1$.
\end{lemma}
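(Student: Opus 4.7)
The plan is to realize $Y$ as a toric quotient of a weighted projective space built from a natural sublattice. Let $Y$ be an $N$-dimensional generalized weighted projective space with fan in $\mathcal{N}\cong \ZZ^N$, and let $v_0,\ldots,v_N$ be the primitive generators of its $N+1$ rays. Since $Y$ is complete, these vectors positively span $\mathcal{N}\otimes\mathbb{R}$, and hence satisfy a unique (up to scaling) relation
$$
a_0v_0+\ldots+a_Nv_N=0
$$
with positive integer coefficients $a_i$; we normalize so that $\gcd(a_0,\ldots,a_N)=1$.

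Introduce the sublattice $\mathcal{N}'=\ZZ v_0+\ldots+\ZZ v_N\subseteq \mathcal{N}$ and set $d=[\mathcal{N}:\mathcal{N}']$. Let $Y'$ be the toric variety defined by the same combinatorial fan but viewed inside $\mathcal{N}'$. Each $v_i$ remains primitive in $\mathcal{N}'$: any point of $\mathcal{N}'\subseteq\mathcal{N}$ lying on the ray $\mathbb{R}_{\ge 0}v_i$ is by primitivity of $v_i$ in $\mathcal{N}$ a non-negative integer multiple of $v_i$, and $v_i\in\mathcal{N}'$. In view of Example \ref{example:wps grading}, it follows that $Y'\cong \PP(a_0,\ldots,a_N)$.

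The lattice inclusion $\mathcal{N}'\hookrightarrow\mathcal{N}$ combined with the equality of fans induces a finite toric morphism $\varphi\colon Y'\to Y$ that presents $Y$ as the geometric quotient $Y'/G$ by the finite abelian group $G=\mathrm{Hom}_\ZZ(\mathcal{N}/\mathcal{N}',\CC^*)$ of order $d$. If $d=1$, then $Y=Y'$ is a weighted projective space, and we are done. Otherwise $G$ is non-trivial and it remains to show it acts freely in codimension $1$. The action of $G$ on the big torus of $Y'$ is free by construction. The remaining codimension-$1$ loci of $Y'$ are the torus-invariant prime divisors $D_i'$ corresponding to the rays $\mathbb{R}_{\ge 0}v_i$. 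The ramification index of $\varphi$ along $D_i'$ equals the index of $\ZZ v_i$ in $\mathcal{N}\cap\mathbb{R} v_i=\ZZ v_i$ (using primitivity of $v_i$ in $\mathcal{N}$), so this index is $1$. Thus $\varphi$ is étale in codimension $1$, which for a Galois cover with group $G$ is equivalent to $G$ acting freely outside a locus of codimension at least $2$.

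The main obstacle is the last step: linking the quotient description $Y=Y'/G$ to the ramification behaviour of the toric morphism $\varphi$ along the boundary divisors. Once one observes that the primitive generators of the rays coincide in $\mathcal{N}$ and in $\mathcal{N}'$, both conclusions fall out of the standard dictionary between lattice inclusions of finite index and finite abelian Galois covers of toric varieties.
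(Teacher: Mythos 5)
The paper does not prove this lemma itself but quotes it from Kasprzyk, and your argument is correct and is essentially the proof given there: pass to the sublattice $\mathcal{N}'$ generated by the primitive ray generators, identify the resulting toric variety with $\PP(a_0,\ldots,a_N)$ via the relation $\sum a_iv_i=0$, and check that the induced Galois cover is unramified along the boundary divisors because the $v_i$ remain primitive in $\mathcal{N}'$, so the group $\mathcal{N}/\mathcal{N}'$ acts freely in codimension $1$. The one point worth flagging is that you use completeness of $Y$ to get the positive relation and the simpliciality of the fan; this is not literally part of Definition~\ref{definition:FPS} as stated in the paper, but it holds in every application and is built into Kasprzyk's definition of a fake weighted projective space.
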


We call a polynomial~\mbox{$f\in S$} homogeneous if for some~\mbox{$d\in \Cl(Y)$} all the monomials of $f$ are of degree $d$.
For any homogeneous polynomials $f_1,\ldots, f_k$
the intersection $C_X^*$ of their common zero locus with $U$ is stable under the action of $\mathbf{D}$.
Thus $C_X^*$ determines a closed subset $X$ in~$Y$.

Note that this description specializes to weighted complete intersections in the case when $Y$ is a weighted projective space.
In the same way as for weighted complete intersections we can give standard definitions
for the case of complete intersections in toric varieties.
Let $X\subset Y$ have codimension~$k$, so that~\mbox{$C_X^*\subset U$} is also of codimension $k$.
If there are $k$ generators $f_1,\ldots, f_k$ of the ideal of~$C_X^*$ in~$U$, so that $C_X^*$ is a complete intersection in $U$,
we say that $X$  is a \emph{complete intersection} of the hypersurfaces that are images in $Y$ of the divisors defined by equations $f_j=0$ in~$U$.
This is equivalent to
the regularity of the sequence $f_1,\ldots, f_k$ in the localization of $S$ with respect to the ideal defining $Z$. We say that $X$ is \emph{well formed}
if
$$
\mathrm{codim}_X \left( X\cap\mathrm{Sing}\,Y \right)\ge 2.
$$

\begin{remark}
Let $Y$ be a weighted projective space $\PP(a_0,\ldots,a_N)$
considered as a toric variety.
We may assume that the greatest common divisor of the numbers
$a_0,\ldots,a_N$ equals~$1$.
According to Example~\ref{example:wps grading}, for a suitable choice of the weights
$a_0,\ldots,a_N$ the fan of the toric variety $Y$ in $\ZZ^N$
has $N+1$ rays with primitive vectors $v_0,\ldots,v_N$ subject to
relation~\eqref{eq:relation}.
If all numbers among $a_0,\ldots,a_N$ except one, say, the numbers
$a_1,\ldots,a_N$, are divisible by some integer $a>1$, then
all coordinates of the vector $v_0$ are divisible by~$a$; the latter is
impossible because $v_0$ is a primitive vector in~$\ZZ^N$.
Thus $Y$ is automatically well formed
in the sense of~\cite[Definition 5.11]{IF00}.
In other words, the description of $Y$ as a toric variety recovers the
unique collection $a_0,\ldots,a_N$
among all collections of weights defining the same weighted
projective space $Y$ such that the weighted projective space is
well formed (recall that
well formedness in the sense of~\cite{IF00} is a property of the weights, not the
weighted projective space itself). As a consequence, we conclude that
the two notions of well formedness for weighted complete intersections
agree with each other.
\end{remark}

It appears that many cohomology groups of a complete intersection
of ample hypersurfaces in a toric variety $Y$
are defined by the cohomology groups of $Y$,
similarly to complete intersections in usual projective spaces.

\begin{theorem}[{Lefschetz theorem for toric varieties, see~\cite[Proposition~1.4]{Ma99}}]
\label{theorem:Lefschetz}
Let~$Y$ be a $\mathbb{Q}$-factorial projective toric variety of dimension $N$, and let $X\subset Y$ be a complete intersection of $k$ ample hypersurfaces in $Y$.
Then the natural map
$$
H^i(Y,\ZZ)\to H^i(X,\ZZ)
$$
is an isomorphism for~\mbox{$i<N-k=\dim(X)$} and an injection
for $i=N-k$.
\end{theorem}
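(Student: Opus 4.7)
The plan is to argue by induction on the codimension $k$ and reduce to the base case of a single ample hypersurface. Given $X=X_1\cap\cdots\cap X_k$ with each $X_j$ an ample hypersurface in $Y$, I would consider the flag
\[
Y = X^{(0)} \supset X^{(1)} \supset \cdots \supset X^{(k)} = X,\qquad X^{(j)} = X_1\cap\cdots\cap X_j,
\]
and apply the base case $k=1$ to each consecutive pair $(X^{(j-1)},X^{(j)})$. Composing the resulting restriction maps then yields the isomorphism in degrees $i<N-k$ and the injection in degree $i=N-k$, since the range in which each individual step is an isomorphism shrinks by one at each stage and the weakest constraint is precisely the one at $j=k$.

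For the base case, let $X\subset Y$ be a single ample hypersurface. Since a suitable multiple of $X$ is very ample, the complement $U=Y\setminus X$ is affine. The crucial input is the Lefschetz-type vanishing
\[
H^i(Y,X;\ZZ)=0 \quad\text{for } i\leq N-1,
\]
which via excision is equivalent to $H^i_c(U;\ZZ)=0$ in the same range. This is a manifestation of the Andreotti--Frankel principle: an affine complex variety of dimension $N$ has the homotopy type of a CW complex of real dimension at most $N$, so its compactly supported cohomology vanishes in low degree. For smooth $U$ this is classical; in the $\QQ$-factorial toric setting relevant here I would invoke the Goresky--MacPherson stratified Morse theory applied to a plurisubharmonic exhaustion on $U$, which still bounds the Morse indices by~$N$. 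Feeding the resulting vanishing into the long exact sequence of the pair,
\[
\cdots \to H^{i-1}(X) \to H^i(Y,X) \to H^i(Y) \to H^i(X) \to H^{i+1}(Y,X) \to \cdots,
\]
one reads off at once that $H^i(Y)\to H^i(X)$ is an isomorphism for $i<N-1$ and an injection for $i=N-1$.

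The main obstacle is the integral refinement of the affine vanishing in the singular setting. Over $\QQ$ the bound on $H^i(Y,X;\QQ)$ follows painlessly from Poincar\'e--Lefschetz duality on the $\QQ$-factorial $Y$ (which is a rational homology manifold) combined with Andreotti--Frankel, but over $\ZZ$ this duality argument breaks down, and one must instead invoke the homotopical version of stratified Morse theory, which controls the weak homotopy type of $U$ itself rather than merely its rational cohomology. A secondary technical point is that the intermediate varieties $X^{(j)}$ in the induction need not themselves be toric, so one must verify that the base case applies to them; this reduces to checking that the restriction of $X_{j+1}$ to $X^{(j)}$ remains an ample Cartier divisor, a standard consequence of ampleness on~$Y$ together with the regularity of the defining sequence $f_1,\ldots,f_k$.
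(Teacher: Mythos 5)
First, a remark on the comparison itself: the paper does not prove this statement at all --- it is quoted from \cite[Proposition~1.4]{Ma99} --- so there is no internal argument to measure yours against, and your proposal has to stand on its own. Your skeleton (induction on the codimension, reducing to the Lefschetz theorem for a single ample hypersurface via affineness of the complement) is the standard one, but the base case as you justify it has a genuine gap. You deduce $H^i_c(U;\ZZ)=0$ for $i\le N-1$, where $U=Y\setminus X$, from the statement that the affine variety $U$ has the homotopy type of a CW complex of real dimension at most $N$. Compactly supported cohomology is not a homotopy invariant, so the second statement does not imply the first. The bridge between ``homotopy dimension $\le N$'' and ``$H^i_c=0$ for $i<N$'' is Poincar\'e duality on $U$, and that is exactly what fails over $\ZZ$ when $U$ is singular: a $\QQ$-factorial toric variety has quotient singularities, which make it a rational but not an integral homology manifold (the singular point of $\CC^2/\{\pm 1\}$ has link $\mathbb{R}\mathrm{P}^3$ rather than $S^3$). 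Your proposed repair --- invoking a ``homotopical version'' of stratified Morse theory that controls the weak homotopy type of $U$ --- does not address this, for the same reason: however much you know about the homotopy type of $U$, you learn nothing about $H^*_c(U;\ZZ)$ without duality. That the vanishing is not a formal consequence of affineness alone is shown by taking $U$ to be two planes in $\AA^4$ meeting at a point: this is affine of dimension $2$, yet $H^1_c(U;\ZZ)\cong\ZZ$.

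What stratified Morse theory actually provides is a direct bound on the relative homotopy groups $\pi_i(Y,X)$: one builds $Y$ from a neighbourhood of $X$ by attaching cells whose dimensions are controlled by the local Morse data along each stratum, and the lower bound $N$ on those dimensions is valid when the complement is a local complete intersection but degrades for general singularities. Simplicial toric varieties need not be lci, so one must check separately that quotient singularities do not spoil the bound; this is where the real content of the cited result lies (via the $V$-manifold structure, cf.\ also \cite{Da78}), and your proposal does not engage with it. The same omission affects your induction step: you rightly note that the intermediate varieties $X^{(j)}$ are not toric, but the point needing verification there is not the ampleness of the restricted divisor (which is easy) but membership in a class of singular varieties for which the single-hypersurface Lefschetz theorem holds with the full bound $i<\dim$.
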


\begin{corollary}
\label{corollary:connectedness}
A positive-dimensional complete intersection of ample hypersurfaces in a $\mathbb{Q}$-factorial projective toric variety is connected.
\end{corollary}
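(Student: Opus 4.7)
The plan is to deduce this directly from Theorem~\ref{theorem:Lefschetz} applied at the lowest possible cohomological degree, namely $i=0$. Since $X$ is assumed positive-dimensional, we have $\dim(X)=N-k\ge 1$, so the inequality $i<N-k$ is satisfied for $i=0$. Therefore the conclusion of Theorem~\ref{theorem:Lefschetz} provides an isomorphism
$$
H^0(Y,\ZZ)\xrightarrow{\;\sim\;} H^0(X,\ZZ).
$$

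The next step is to identify both sides. A toric variety is by definition irreducible, so $Y$ is connected; moreover $Y$ is projective, hence compact. Consequently $H^0(Y,\ZZ)\cong \ZZ$. For any topological space, the rank of $H^0$ with integer coefficients counts the number of connected components, so the isomorphism above forces $H^0(X,\ZZ)\cong \ZZ$, and hence $X$ has precisely one connected component.

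There is essentially no obstacle here, since the heavy lifting is done by the Lefschetz-type Theorem~\ref{theorem:Lefschetz}; one only needs to check that the numerical hypothesis $i<N-k$ is compatible with taking $i=0$, which is exactly the positive-dimensionality assumption on $X$. The only minor point worth emphasizing in the write-up is why $Y$ itself is connected (irreducibility of a toric variety), so that $H^0(Y,\ZZ)=\ZZ$ rather than a higher-rank free abelian group.
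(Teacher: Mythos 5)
Your proof is correct and is exactly the intended deduction: the paper states this as an immediate corollary of Theorem~\ref{theorem:Lefschetz} without written proof, and applying that theorem at $i=0$ (valid since $\dim(X)\ge 1$) together with $H^0(Y,\ZZ)\cong\ZZ$ is the argument the authors have in mind.
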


\begin{corollary}
\label{corollary:Pic}
Let $X$ be a complete
intersection of ample hypersurfaces
in a $\mathbb{Q}$-factorial projective toric variety $Y$.
Then the restriction map
$$
\Pic(Y)\to\Pic(X)
$$
is an isomorphism if $\dim(X)\ge 3$, and
is injective if $\dim(X)=2$.
Furthermore, if~\mbox{$\dim(X)\ge 2$} and
$X$ is $\mathbb{Q}$-factorial, then  $\mathrm{rk\,Cl}(X)\ge\mathrm{rk\,Cl}(Y)$.
\end{corollary}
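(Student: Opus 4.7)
The plan is to identify $\Pic(Y)$ and $\Pic(X)$ with appropriate pieces of singular cohomology through the exponential sheaf sequence, and then to transport the content of Theorem~\ref{theorem:Lefschetz} across these identifications.

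On the ambient side, a complete $\QQ$-factorial toric variety satisfies $H^i(Y,\O_Y)=0$ for every $i>0$, so the exponential sequence yields a canonical isomorphism $\Pic(Y)\cong H^2(Y,\ZZ)$. For the complete intersection $X$, one wants the analogous vanishing $H^i(X,\O_X)=0$ for $1\le i\le\dim(X)-1$. I would obtain this by resolving $\O_X$ via the Koszul complex of $f_1,\ldots,f_k$ on $Y$ and invoking an appropriate toric Kodaira-type vanishing for $H^j(Y,\O_Y(-D))$ with $j<\dim(Y)$ and $D$ ample. The exponential sequence on $X$ then gives an injection $\Pic(X)\hookrightarrow H^2(X,\ZZ)$, which is an isomorphism as soon as $\dim(X)\ge 3$.

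Naturality of the exponential sequence produces a commutative square connecting $\Pic(Y)\to\Pic(X)$ and the pullback $H^2(Y,\ZZ)\to H^2(X,\ZZ)$. Theorem~\ref{theorem:Lefschetz} in degree $2$ says that the latter arrow is an isomorphism when $\dim(X)\ge 3$ and an injection when $\dim(X)=2$. Combined with the identifications above, this forces $\Pic(Y)\to\Pic(X)$ to be an isomorphism, respectively an injection, as claimed. For the rank statement, $\QQ$-factoriality of a variety $V$ implies $\Cl(V)\otimes\QQ\cong\Pic(V)\otimes\QQ$, so the two ranks coincide; applying this to both $Y$ and $X$ and combining with the injection $\Pic(Y)\hookrightarrow\Pic(X)$ yields $\mathrm{rk\,Cl}(X)\ge\mathrm{rk\,Cl}(Y)$.

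The principal obstacle is the vanishing $H^i(X,\O_X)=0$ in the intermediate range. In the classical setting $Y=\PP^N$ this is immediate from Kodaira vanishing on projective space, but here $Y$ is only $\QQ$-factorial and the hypersurfaces cut out by the $f_j$ may fail to be Cartier, so one must appeal to a suitable vanishing theorem valid for $\QQ$-Cartier ample divisors on singular toric varieties rather than to the smooth projective statement. Once that input is secured, the remainder is a routine diagram chase with the exponential sequence.
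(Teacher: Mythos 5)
Your skeleton is the paper's: exponential sequences on $Y$ and $X$, naturality, Theorem~\ref{theorem:Lefschetz} in degree $2$, and $\mathrm{rk\,Cl}=\mathrm{rk\,Pic}$ for $\QQ$-factorial varieties. In particular, the injectivity for $\dim(X)=2$ and the rank inequality need no vanishing on the $X$ side at all: $\Pic(Y)\cong H^2(Y,\ZZ)\hookrightarrow H^2(X,\ZZ)$ factors through $\Pic(X)$, so $\Pic(Y)\to\Pic(X)$ is injective for free. The only real issue is the isomorphism for $\dim(X)\ge 3$, where one must kill the kernel of $\Pic(X)\to H^2(X,\ZZ)$, i.e.\ show that the image of $H^1(X,\O_X)$ in $\Pic(X)$ vanishes. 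This is precisely the step you leave open, and the route you sketch for it --- a Koszul resolution of $\O_X$ on $Y$ plus a toric Kodaira-type vanishing --- is a genuine gap: the hypersurfaces are in general only $\QQ$-Cartier Weil divisors, so the Koszul terms are reflexive sheaves rather than line bundles, exactness of the complex and the behaviour of the reflexivized tensor products are not automatic, and the vanishing you would need for $H^j\bigl(Y,\O_Y(-D)\bigr)$ with $D$ an ample Weil divisor is not an off-the-shelf statement in this generality. Note also that you do not need $H^2(X,\O_X)=0$ at all: when $\dim(X)\ge 3$ the surjectivity of $\Pic(X)\to H^2(X,\ZZ)$ is automatic, since the composite $\Pic(Y)\cong H^2(Y,\ZZ)\to H^2(X,\ZZ)$ is already onto.

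The paper closes the remaining step by a purely topological input rather than coherent vanishing: Theorem~\ref{theorem:Lefschetz} in degree $1$. Since $Y$ is toric, $H^1(Y,\ZZ)=0$, hence $H^1(X,\ZZ)=0$. The exponential sequence then identifies the kernel of $\Pic(X)\to H^2(X,\ZZ)$ with $H^1(X,\O_X)/\mathrm{im}\,H^1(X,\ZZ)=H^1(X,\O_X)$, which is simultaneously a $\CC$-vector space and the group $\Pic^0(X)$ of the projective variety $X$; the latter is compact (the paper is terse here, but this is the standard structure theory of the Picard variety), so it is trivial. This single use of the degree-one Lefschetz theorem replaces your entire vanishing argument, sidesteps the Weil-versus-Cartier difficulties, and is the one place where your write-up needs to be repaired.
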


\begin{proof}
Note that $H^1(Y,\ZZ)=0$, because $Y$ is a toric variety.
By Theorem~\ref{theorem:Lefschetz}, one has~\mbox{$H^1(X,\ZZ)=0$};
moreover, the map $H^2(X,\ZZ)\to H^2(Y,\ZZ)$
is an isomorphism if~\mbox{$\dim(X)\ge 3$}, and
is injective if $\dim(X)=2$.
Thus, the assertions about the Picard group follow
from the exponential exact sequence.
On the other hand,
if $X$ is $\mathbb{Q}$-factorial,
one has $\mathrm{rk\,Cl}(X)=\mathrm{rk\,Pic}(X)$ (and also
$\mathrm{rk\,Cl}(Y)=\mathrm{rk\,Pic}(Y)$).
\end{proof}

For the following we need a general fact (see, for instance,~\cite[Corollary~3]{DD85}).

\begin{proposition}
\label{proposition:branching}
Let $U$ and $V$ be normal varieties, and let $f\colon U\to V$ be
a finite surjective
morphism.
Then the branch locus $B(f)$ of $f$ has codimension $1$
at any point of~\mbox{$B(f)\cap \left(V\setminus \mathrm{Sing}\, (V)\right)$}.
\end{proposition}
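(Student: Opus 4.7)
The assertion is the purity of the branch locus at smooth points of $V$, which is a classical form of the Zariski--Nagata theorem. My plan is to localize at a smooth point of $V$ lying in $B(f)$ and then extract two inequalities on the height of the discriminant ideal.

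Since the claim is local on $V$, fix a point $v\in B(f)\cap(V\setminus\mathrm{Sing}(V))$ and, after replacing $V$ by a Zariski neighbourhood of $v$, assume that $V$ is smooth. Set $B=\mathcal O_{V,v}$, a regular (hence factorial) local ring, and let $A$ be the corresponding finite $B$-algebra obtained from $f_*\mathcal O_U$; it decomposes as a product of normal local rings indexed by the points of $f^{-1}(v)$. Near $v$, the branch locus is cut out by the ideal $\mathfrak d_{A/B}\subset B$, which may be defined as the $0$-th Fitting ideal of $\Omega_{A/B}$, or equivalently as the ideal generated by the discriminant of the trace pairing on the generic fibre. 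The statement to be proved is that $\mathrm{ht}\,\mathfrak d_{A/B}=1$.

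The lower bound $\mathrm{ht}\,\mathfrak d_{A/B}\ge 1$ is immediate: since the characteristic is zero and $U$ is reduced, $f$ is generically \'etale, hence $\mathfrak d_{A/B}\ne 0$; on the other hand $v\in B(f)$ forces $\mathfrak d_{A/B}\ne B$, so its height is at least~$1$. For the upper bound $\mathrm{ht}\,\mathfrak d_{A/B}\le 1$ I would invoke Zariski--Nagata purity: a finite morphism from a normal scheme to a regular one which is \'etale at every height-one point of the target is \'etale everywhere. Applied contrapositively, the branch locus cannot be concentrated in codimension $\ge 2$ at a regular point of $V$; hence some height-one prime of $B$ contains $\mathfrak d_{A/B}$ and $v$ lies in its vanishing locus.

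The main obstacle is the appeal to purity itself, a nontrivial commutative-algebra result on finite covers of regular local rings. In the reference cited with the proposition this input is already packaged as a direct statement about branch loci of finite morphisms, so once the localization and the setup of the discriminant ideal are in place, no further calculation is required.
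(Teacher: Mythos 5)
Your proposal is correct, and it takes essentially the same route as the paper: the paper offers no argument of its own but simply cites this statement as a known general fact (\cite[Corollary~3]{DD85}), which is exactly the classical Zariski--Nagata purity of the branch locus over a regular base that your localization and discriminant setup reduce to. The only real content in either version is the appeal to purity itself, which you correctly identify as the irreducible input.
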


\begin{proposition}\label{proposition:quotient}
Let $X$ be a positive-dimensional well formed Fano complete intersection in a $\mathbb{Q}$-factorial projective toric variety $Y$.
Suppose that $Y$ is a quotient
of a weighted projective space by a non-trivial finite group $G$ acting freely in codimension~$1$.
Then $X$ is singular.
\end{proposition}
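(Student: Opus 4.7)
The plan is to argue by contradiction: assuming $X$ is smooth, I would construct a non-trivial étale cover of $X$ and invoke the simple connectedness of smooth Fano varieties to derive a contradiction.

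Let $\PP=\PP(a_0,\ldots,a_N)$ be the weighted projective space with $Y\cong\PP/G$, and let $\pi\colon\PP\to Y$ be the quotient map. Set $\tilde X=\pi^{-1}(X)\subset\PP$. Since $\pi$ is finite, the pullbacks of the ample hypersurfaces defining $X$ are ample hypersurfaces in $\PP$ whose scheme-theoretic intersection is $\tilde X$; since $\tilde X$ has pure codimension $k$ in the Cohen--Macaulay variety $\PP$, these defining equations form a regular sequence, and $\tilde X$ is a positive-dimensional complete intersection of ample hypersurfaces in $\PP$. By Corollary~\ref{corollary:connectedness}, $\tilde X$ is connected.

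Next I would show that $\pi|_{\tilde X}\colon\tilde X\to X$ is étale. The hypothesis that $G$ acts freely in codimension~$1$ means that the branch locus $B(\pi)$ has codimension $\ge 2$ in $Y$, and Proposition~\ref{proposition:branching} applied to $\pi$ then forces $B(\pi)\subset\mathrm{Sing}\,Y$. Well formedness of $X$ gives $\mathrm{codim}_X(X\cap B(\pi))\ge 2$, and the branch locus of $\pi|_{\tilde X}$, contained in $X\cap B(\pi)$, also has codimension $\ge 2$ in $X$. On the other hand, $\tilde X$ is Cohen--Macaulay and smooth on $\pi^{-1}(X\setminus\mathrm{Sing}\,Y)$, since $\pi$ is étale over $Y\setminus\mathrm{Sing}\,Y$ and $X$ is smooth there; the complement $\pi^{-1}(X\cap\mathrm{Sing}\,Y)$ has codimension $\ge 2$ in $\tilde X$, so $\tilde X$ is regular in codimension~$1$ and hence normal by Serre's criterion. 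Applying Proposition~\ref{proposition:branching} now to $\pi|_{\tilde X}$ (a finite surjective morphism of normal varieties) and using smoothness of $X$, its branch locus is either empty or pure of codimension $1$; the codimension estimate forces it to be empty. Thus $\pi|_{\tilde X}$ is étale, of degree $|G|$ because at any point of $X\setminus\mathrm{Sing}\,Y$ the fiber of $\pi$ is a free $G$-orbit of cardinality $|G|$.

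This produces a connected étale cover of $X$ of degree $|G|>1$, contradicting the simple connectedness of smooth Fano varieties (a consequence of the Kollár--Miyaoka--Mori rational connectedness theorem). The main technical obstacle I anticipate is the normality of $\tilde X$: one must combine Cohen--Macaulayness (from the complete intersection structure in $\PP$) with regularity in codimension $1$ (from well formedness of $X$) through Serre's criterion. A secondary subtlety is that $\tilde X$ is really a complete intersection in $\PP$ in the sense of a regular sequence, not merely set-theoretically; this again uses Cohen--Macaulayness of $\PP$ and the fact that $\tilde X$ has the expected codimension $k$.
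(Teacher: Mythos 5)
Your proof is correct and follows essentially the same route as the paper's: both show via Proposition~\ref{proposition:branching} and freeness in codimension~$1$ that the branch locus of the quotient map lies in $\mathrm{Sing}\,Y$, use well formedness to force the induced cover $\tilde X\to X$ to be \'etale, and contradict the simple connectedness of the smooth Fano $X$ using connectedness of $\tilde X$ from Corollary~\ref{corollary:connectedness}. The only differences are organizational (the paper splits into the cases $B(\psi)\cap X=\varnothing$ and $B(\psi)\cap X\neq\varnothing$ rather than proving emptiness outright) and that you spell out the normality of $\tilde X$ needed to apply Proposition~\ref{proposition:branching}, a point the paper leaves implicit.
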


\begin{proof}
Suppose that $X$ is smooth.
Let $\psi\colon \PP\to Y$ be the quotient map, where $\PP$ is a weighted projective space.
We claim that the branch locus $B(\psi)\subset Y$ of $\psi$ lies in the singular locus of $Y$.
Indeed, by Proposition~\ref{proposition:branching} the locus $B(\psi)\setminus \mathrm{Sing}\,Y$ has codimension $1$ in $Y$.
Together with the freeness in codimension $1$ of the action of $G$ this implies that
$$
B(\psi)\setminus \mathrm{Sing}\,Y=\varnothing.
$$
Proposition~\ref{proposition:branching} applied to the covering $X'\to X$, where $X'=\psi^{-1}(X)$, implies that this covering is branched in the locus $B(\psi)\cap X$ of codimension $1$ in $X$.

Suppose that  $B(\psi)\cap X=\varnothing$, so that $X'\to X$ is an unramified covering. Note that~\mbox{$X'$} is a complete intersection in $Y$;
since $Y$ is a weighted projective space,~\mbox{$X'$} is a complete intersection of ample hypersurfaces.
Thus it follows from Corollary~\ref{corollary:connectedness}
that~\mbox{$X'$} is connected. On the other hand, the smooth Fano variety $X$ has trivial fundamental group (see, for instance,~\cite[Corollary~6.2.18]{IP99}). The obtained contradiction shows that~\mbox{$B(\psi)\cap X\neq\varnothing$}.
However, since
$$
B(\psi)\cap X\subset \mathrm{Sing}\,Y\cap X,
$$
the latter contradicts the well formedness
of $X$.
\end{proof}

\begin{theorem}\label{theorem:picone is usual}
Let $X$ be a smooth well formed Fano
complete intersection of ample hypersurfaces in a $\mathbb{Q}$-factorial projective toric variety~$Y$.
Suppose that~\mbox{$\dim(X)\ge 2$} and~\mbox{$\mathrm{rk\,Pic}(X)=1$}.
Then $Y$ is a weighted projective space.
\end{theorem}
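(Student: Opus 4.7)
The plan is to reduce the statement to Lemma~\ref{lemma:quotient Pic 1} and Proposition~\ref{proposition:quotient} by first showing that $Y$ has Picard rank one and has exactly $\dim(Y)+1$ rays in its fan.

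First I would bound the Picard rank of $Y$. Since $X$ is smooth, it is in particular $\mathbb{Q}$-factorial, and for smooth varieties the Picard group coincides with the class group, so $\mathrm{rk\,Cl}(X)=\mathrm{rk\,Pic}(X)=1$. Applying the last assertion of Corollary~\ref{corollary:Pic} (which requires $\dim(X)\ge 2$ and $\mathbb{Q}$-factoriality of $X$) gives $\mathrm{rk\,Cl}(Y)\le \mathrm{rk\,Cl}(X)=1$; together with projectivity of $Y$ this forces $\mathrm{rk\,Cl}(Y)=1$, and hence $\mathrm{rk\,Pic}(Y)=1$ because $Y$ is $\mathbb{Q}$-factorial.

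Next I would translate this into combinatorial information on the fan. For a $\mathbb{Q}$-factorial projective toric variety the class group fits into the exact sequence coming from the prime boundary divisors $D_1,\ldots,D_b$ described right before Example~\ref{example:wps grading}: there is a surjection $\mathbb{Z}^b\twoheadrightarrow \Cl(Y)$ whose kernel is the character lattice of the torus, which has rank $\dim(Y)$. Therefore $\mathrm{rk\,Cl}(Y)=b-\dim(Y)$, and the conclusion of the previous step yields $b=\dim(Y)+1$. By Definition~\ref{definition:FPS}, $Y$ is a generalized weighted projective space.

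Finally I would invoke Lemma~\ref{lemma:quotient Pic 1} to write $Y$ either as a weighted projective space, in which case we are done, or as a quotient of a weighted projective space by a non-trivial finite group acting freely in codimension one. In the second case, Proposition~\ref{proposition:quotient} applies directly to the smooth well formed Fano complete intersection $X\subset Y$ and concludes that $X$ is singular, contradicting our hypothesis. Hence only the first alternative occurs, and $Y$ is a weighted projective space. The only place where anything subtle happens is the passage between the Picard rank of $X$ and that of $Y$, and this is already handled by Corollary~\ref{corollary:Pic}; the rest is bookkeeping combined with the two earlier structural results.
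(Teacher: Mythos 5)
Your proposal is correct and follows essentially the same route as the paper: Corollary~\ref{corollary:Pic} gives $\mathrm{rk\,Cl}(Y)=1$, hence $Y$ is a generalized weighted projective space, and Lemma~\ref{lemma:quotient Pic 1} together with Proposition~\ref{proposition:quotient} rules out the quotient case. You merely spell out two steps the paper leaves implicit (that smoothness of $X$ supplies the $\mathbb{Q}$-factoriality needed in the corollary, and the ray count $b=\dim(Y)+1$ via the divisor sequence), both of which are fine.
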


\begin{proof}
We know from Corollary~\ref{corollary:Pic} that~\mbox{$\mathrm{rk\,Cl}(Y)=1$}.
In other words, $Y$ is a generalized weighted projective space.
Thus by Lemma~\ref{lemma:quotient Pic 1}
the variety~$Y$ is either a weighted projective space,
or a quotient
of a weighted projective space by a non-trivial finite group  acting freely in codimension~$1$.
Now the assertion follows from Proposition~\ref{proposition:quotient}.
\end{proof}

Note that the assertion of Theorem~\ref{theorem:picone is usual} fails if $X$ has dimension $1$.
For instance, a smooth rational curve can be embedded into the toric surface $Y=\PP^1\times\PP^1$ as an ample hypersurface
of bidegree $(1,1)$.
Also, Theorem~\ref{theorem:picone is usual} does not hold without the assumption that
$X$ is Fano. Indeed, let $Y$ be a generalized weighted projective space
of dimension at least $4$ with isolated singularities
such that $Y$ is not a weighted projective space, see
Examples~\ref{example:general type 3fold}
and~\ref{example:general general type}.
Let $X$ be a general divisor from a sufficiently ample
linear system on~$Y$. Then $X$ is smooth by Bertini theorem,
and~\mbox{$\mathrm{rk\,Pic}(X)=1$} by Corollary~\ref{corollary:Pic}.
Finally, we point out that the smoothness assumption in
Theorem~\ref{theorem:picone is usual} is also essential.
Indeed, let~\mbox{$p\ge 5$} be a prime number, and let $H$
be the hyperplane defined by the equation~\mbox{$x_{p-1}=0$} in
the projective space $\PP^{p-1}$ with homogeneous coordinates
$x_0,\ldots,x_{p-1}$. Let~$Y$ be the generalized weighted projective space constructed as
in Example~\ref{example:general general type}. Then the image of $H$ in $Y$ is a prime Fano hypersurface with terminal
singularities.

\medskip
The authors are grateful to Yu.\,Prokhorov for his helpful comments.

\end{document}